\title{Demonstração Formal de um Algoritmo Alternativo da Multiplicação em Base 10}
\author{
Anthony Lima Dias\\
\textit{Estudante Educandário Santa Therezinha}\\
\texttt{limachaves04@gmail.com}
\and
Albert Lucas Lima Dias\\
\textit{Estudante Educandário Santa Therezinha}\\
\texttt{limachaves04@gmail.com}
\and
Ib Couto\\
\textit{Educandário Santa Therezinha}\\
\texttt{ibcouto@gmail.com}
\and
Mabia Lima Chaves dos Santos\\
\textit{Educandário Santa Therezinhal}\\
\texttt{limachaves04@gmail.com}
\and
Marcus Vinícius da Conceição Morro\\
\textit{Instituto Federal de Rondônia}\\
\texttt{marcus.morro@ifro.edu.br}
}
\date{Outubro de 2025}
\newtheorem{theorem}{Teorema}
\begin{document}

\maketitle


\begin{abstract}

This article presents and formalizes an elementary multiplication method discovered independently by a 10-year-old student, Anthony Lima Dias. The method reorganizes digit interactions in base-10 multiplication into a structured sequence of partial sums, reducing cognitive load and allowing reliable mental or semi-written computation. We provide a full mathematical proof of correctness, a comparison with the classical algorithm, formal notation, and a detailed contextual account of the discovery. The method expands the known catalog of student-invented algorithms and raises questions about cognitive pathways in arithmetic learning.

\begin{flushleft}
\textbf{Keywords:} multiplication methods, mathematics education, mental calculation, student-invented algorithms, alternative algorithms, arithmetic strategies.
\end{flushleft}

-----

Este artigo apresenta e formaliza um metodo elementar de multiplicacao descoberto de forma independente por um estudante de 10 anos, Anthony Lima Dias. O metodo reorganiza as interacoes entre algarismos na multiplicacao em base 10 em uma sequencia estruturada de somas parciais, reduzindo a carga cognitiva e permitindo um calculo mental ou semi-escrito mais confiavel. Fornecemos uma demonstracao matematica completa de corretude, uma comparacao com o algoritmo tradicional, notacoes formais e um relato contextual detalhado da descoberta. O metodo amplia o catalogo conhecido de algoritmos inventados por estudantes e levanta questoes sobre caminhos cognitivos na aprendizagem aritmetica.

\begin{flushleft}
\textbf{Palavras-chave:} metodos de multiplicacao, educacao matematica, calculo mental, algoritmos inventados por alunos, algoritmos alternativos, estrategias aritmeticas.
\end{flushleft}

\end{abstract}


\section{Introdução}

A multiplicação de números naturais em base 10 é uma das operações fundamentais do ensino básico e da teoria dos algoritmos. O método tradicional, ensinado há séculos, consiste em calcular os produtos parciais e somá‑los ao final.

Em agosto de 2025, uma variação original desse processo foi descoberta por uma criança brasileira de 10 anos, que desenvolveu intuitivamente uma forma alternativa de calcular o produto, evitando a soma explícita dos produtos parciais.

Este artigo tem como objetivo formalizar esse método, provar sua correção e compará‑lo com o algoritmo tradicional. A análise se alinha com investigações recentes sobre métodos de multiplicação incomuns ou históricas \cite{Madrid2021} e com trabalhos que examinam o pensamento multiplicativo de crianças \cite{LuRichardson2018}.

\section{Contexto da Descoberta}

O algoritmo alternativo apresentado neste artigo foi originalmente descoberto por uma criança de 10 anos, \textbf{Anthony Lima Dias}, aluno do 4º ano do ensino fundamental, durante uma avaliação escolar. A seguir, reproduzimos integralmente o relato feito por sua mãe, \textbf{Mabia Lima}, sobre o episódio.

\begin{quote}
“No dia 27 de agosto de 2025, no turno matutino durante a avaliação de matemática, a criança Anthony Lima Dias, de 10 anos, aluno do 4º ano, fez uma descoberta utilizando um método da multiplicação totalmente diferente do tradicional. Em duas questões, que precisavam fazer essa operação, o mesmo resolveu colocar em prática o cálculo, que do ponto de vista dele, era mais fácil de se chegar ao resultado.

Neste mesmo dia, ao chegar em casa, Anthony comentou conosco, Mabia Lima (mãe) e Albert Lucas (irmão), que tinha realizado na avaliação um método diferente do habitual. Desta forma, foi solicitado ao mesmo realizar a demonstração para que pudéssemos averiguar. Anthony então colocou em prática no papel uma das multiplicações que resolveu na avaliação, \(162 \times 162\). No início achamos muito estranho, uma vez que nunca tínhamos visto nada parecido. Era uma resolução incrível, criativa e diferente. Então fomos colocar em prática com vários outros números e todos os resultados, de números pequenos até maiores, davam exatos.

Diante daquela descoberta, eu, Mabia (mãe), resolvi gravar um vídeo dele resolvendo algumas multiplicações e enviei para uma amiga da família, a professora \textbf{Ib Couto}, que leciona a disciplina de matemática no ensino fundamental II na escola que ele estuda, o \emph{Educandário Santa Therezinha}. Após enviar o vídeo para ela e relatar a descoberta de Anthony, a professora Ib me orientou a validar esse método, pois se tratava de um preceito inédito. Entretanto, a professora Ib Couto não conseguiu fazer a demonstração e julgou necessário passar para um outro profissional que soubesse dar seguimento.

Sendo assim, buscou orientações e parceria com o professor \textbf{Doutor Marcus Morro}, que, com sua experiência e competência, ficaria à frente de todo o trâmite que fosse necessário para reconhecer tal método. Por conseguinte, a pedido do professor Doutor Marcus Morro, eu, Mabia (mãe), e Albert Lucas (irmão) gravamos um vídeo apresentando uma demonstração com algumas multiplicações e enviamos para o e‑mail do professor Doutor Marcus Morro para que, a partir daí, houvesse o comprometimento em fazer a demonstração e dar seguimento ao método de Anthony.”
\end{quote}

Esse relato motivou o estudo e a formalização matemática apresentados neste trabalho, com o objetivo de descrever, demonstrar e comparar o algoritmo de multiplicação proposto por Anthony Lima Dias.

\section{Preliminares e Notações}

Todos os números considerados neste trabalho pertencem ao conjunto dos naturais e são representados em base 10.  
Utilizaremos as seguintes notações:

\begin{itemize}
  \item Para um inteiro \(x\), o resto da divisão por 10 é denotado por
  \[
    x \bmod 10,
  \]
  correspondendo ao dígito das unidades de \(x\).
  \item O quociente inteiro da divisão de \(x\) por 10 é indicado por
  \[
    \lfloor x/10 \rfloor,
  \]
  isto é, o número formado ao eliminar o dígito das unidades de \(x\).
  \item Um número \(X\) escrito em base 10 é expresso como
  \[
    X = \sum_{i=0}^{m} x_i\,10^i, \quad x_i \in \{0,1,\dots,9\}.
  \]
  O dígito \(x_0\) representa as unidades.
  \item O termo \emph{valor transportado} \(c_k\) é definido pela decomposição
  \[
    S_{k-1} = 10\,c_k + r_{k-1}, \quad \text{com } r_{k-1} = S_{k-1} \bmod 10.
  \]
\end{itemize}

\section{Descrição do Algoritmo Alternativo}

Sejam \(A\) e \(B\) dois números naturais expressos em base 10:
\[
  A = \sum_{i=0}^{m} a_i\,10^i, \qquad B = \sum_{j=0}^{n} b_j\,10^j,
\]
onde \(a_i,b_j \in \{0,1,\dots,9\}\).

O algoritmo procede da seguinte forma:

\begin{enumerate}
  \item Calcula‑se \(S_0 = A \cdot b_0\). Define‑se \(r_0 = S_0 \bmod 10\) e \(c_1 = \lfloor S_0 / 10 \rfloor.\)
  \item Para cada \(k \ge 1\), calcula‑se
        \[
          S_k = A \cdot b_k + c_k,
        \]
        toma‑se \(r_k = S_k \bmod 10\) e define‑se \(c_{k+1} = \lfloor S_k / 10 \rfloor.\)
  \item O resultado final é
        \[
          R = c_{n}\,10^{n} + \sum_{i=0}^{n-1} r_i\,10^i.
        \]
\end{enumerate}

\section{Exemplo Numérico: \(1234 \times 567\)}

\begin{align*}
S_0 &= 1234 \times 7 = 8638, & r_0 = 8, &\; c_1 = 863.\\
S_1 &= 1234 \times 6 + 863 = 8267, & r_1 = 7, &\; c_2 = 826.\\
S_2 &= 1234 \times 5 + 826 = 6996, & r_2 = 6, &\; c_3 = 699.\\
\end{align*}

O resultado final é \(R = 699\,678\), coincidente com o produto \(1234 \times 567 = 699\,678\).

\section{Prova de Corretude}

\begin{theorem}
O algoritmo alternativo da multiplicação em base 10 é correto: para quaisquer números naturais \(A\) e \(B\), ele produz o produto \(A \cdot B\).
\end{theorem}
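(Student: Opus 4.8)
The plan is to prove correctness by establishing a loop invariant via induction on the step index $k$ and then specializing it to the final step. The natural quantity to track is the exact algebraic identity
\[
  \sum_{j=0}^{k} A\, b_j\, 10^j \;=\; \sum_{i=0}^{k} r_i\, 10^i \;+\; c_{k+1}\, 10^{k+1},
\]
which asserts that after the digit $b_k$ has been processed, the committed remainders $r_0,\dots,r_k$ together with the pending carry $c_{k+1}$ reconstruct exactly the partial product $\sum_{j=0}^{k} A\,b_j\,10^j$. This is the content that the algorithm implicitly maintains at each pass, and it is what I would isolate as the heart of the argument.

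First I would verify the base case $k=0$: by definition $S_0 = A\,b_0 = 10\,c_1 + r_0$, which is precisely the invariant for $k=0$. For the inductive step, assuming the invariant for $k-1$, I would add the next term $A\,b_k\,10^k$ to both sides, factor out $10^k$ to recognize $c_k + A\,b_k = S_k$, and then substitute the defining decomposition $S_k = 10\,c_{k+1} + r_k$. This rewrites $10^k S_k$ as $r_k\,10^k + c_{k+1}\,10^{k+1}$, which is exactly what advances the invariant from $k-1$ to $k$. In effect the proof is a telescoping of the two defining relations $S_k = A\,b_k + c_k$ and $S_{k-1} = 10\,c_k + r_{k-1}$.

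Specializing the invariant to $k=n$ and using $A\cdot B = \sum_{j=0}^{n} A\,b_j\,10^j$ then yields $A \cdot B = \sum_{i=0}^{n} r_i\,10^i + c_{n+1}\,10^{n+1}$, which is the reconstructed result $R$ (matching the worked example $1234\times 567$, where $c_3=699$ and $r_2r_1r_0 = 678$). I would note in passing that the index convention in Step 3 must be read with this offset, since it is the final carry $c_{n+1}$ and the full block $r_0,\dots,r_n$ that enter the reconstruction.

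The main obstacle here is conceptual rather than computational: getting the invariant and its index offsets exactly right. The decisive point is that the quantity $c_k = \lfloor S_{k-1}/10\rfloor$ is a \emph{full high part}, not a single digit, so it may span several decimal places and the digit-by-digit carry reasoning of the classical algorithm does not transfer verbatim. I would therefore maintain the invariant as an exact integer identity, from which $R = A\cdot B$ follows immediately; and I would remark separately that, because each $r_i \in \{0,\dots,9\}$ occupies a distinct unit position below $10^{\,n+1}$ while $c_{n+1}\,10^{\,n+1}$ supplies all higher positions, the expression for $R$ is a genuine base-10 expansion with no overlapping positions.
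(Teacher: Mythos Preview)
Your proposal is correct and follows essentially the same route as the paper: both isolate the loop invariant $\sum_{i=0}^{k} r_i\,10^i + c_{k+1}\,10^{k+1} = \sum_{j=0}^{k} A\,b_j\,10^j$ and prove it by induction on $k$, then specialize to the last step. You additionally catch the index offset in the stated reconstruction formula (it is indeed $c_{n+1}$ together with $r_0,\dots,r_n$, as the worked example confirms) and observe that the resulting expression is a genuine non-overlapping base-10 expansion, points the paper glosses over.
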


\begin{proof}
A propriedade fundamental é a invariante:
\[
  \sum_{i=0}^{k} r_i\,10^i + 10^{\,k+1}\,c_{k+1} \;=\; \sum_{j=0}^{k} (A \cdot b_j)\,10^j.
\]

\paragraph{Base \(k=0\).} Temos \(S_0 = A \cdot b_0 = 10\,c_1 + r_0\), logo
\[
  r_0 + 10\,c_1 = A \cdot b_0,
\]
verificando a invariante para \(k=0\).

\paragraph{Passo indutivo.} Suponha a igualdade válida para algum \(k \ge 0\):
\[
  \sum_{i=0}^{k} r_i\,10^i + 10^{k+1}\,c_{k+1} = \sum_{j=0}^{k} (A \cdot b_j)\,10^j.
\]
Queremos demonstrar:
\[
  \sum_{i=0}^{k+1} r_i\,10^i + 10^{k+2}\,c_{k+2} = \sum_{j=0}^{k+1} (A \cdot b_j)\,10^j.
\]
Pela definição do algoritmo,
\[
  S_{k+1} = A \cdot b_{k+1} + c_{k+1} = 10\,c_{k+2} + r_{k+1}.
\]
Logo
\[
  A \cdot b_{k+1} = -\,c_{k+1} + r_{k+1} + 10\,c_{k+2}.
\]
Substituindo na hipótese indutiva, obtemos:
\[
  \sum_{i=0}^{k+1} r_i\,10^i + 10^{k+2}\,c_{k+2}
  = \sum_{j=0}^{k} (A \cdot b_j)\,10^j + 10^{k+1}(A \cdot b_{k+1}),
\]
o que confirma a invariante para \(k+1\). Por indução, a propriedade vale para todo \(k\), e portanto
\[
  \sum_{i=0}^{n-1} r_i\,10^i + 10^{\,n}\,c_n = \sum_{j=0}^{n-1} (A \cdot b_j)\,10^j = A \cdot B.
\]
Dessa forma, a concatenação de \(c_n\) com \(r_{n-1}\dots r_0\) fornece exatamente a representação decimal de \(A \cdot B\).
\end{proof}

\section{Comparação entre o Algoritmo Tradicional e o Alternativo}

\subsection*{Estrutura dos cálculos}
\begin{itemize}
  \item \textbf{Tradicional:} para cada dígito \(b_j\) de \(B\), calcula‑se \(A \cdot b_j\) e soma‑se todos os produtos parciais deslocados por \(10^j\).
  \item \textbf{Alternativo:} calcula \(A \cdot b_j\), soma o valor transportado \(c_j\) e determina diretamente o próximo dígito \(r_j\) do resultado.
\end{itemize}

\subsection*{Organização da memória}
\textbf{Tradicional:} exige o armazenamento de várias linhas de produtos parciais.  
\textbf{Alternativo:} mantém apenas o valor transportado e os dígitos já confirmados do resultado.  
\textbf{Vantagem:} menor uso de memória e geração incremental do resultado.

\subsection*{Operações elementares}
Ambos realizam \(m \cdot n\) multiplicações de dígitos. O algoritmo alternativo distribui as adições ao longo do processo, evitando uma soma final extensa.  
\textbf{Vantagem:} menor sobrecarga de transportes na fase final.

\subsection*{Clareza e formalismo}
O algoritmo alternativo é mais adequado para formalização matemática, pois cada passo resolve um único dígito do resultado, permitindo uma prova direta por indução.

\subsection*{Resumo das vantagens do algoritmo alternativo}
\begin{enumerate}
  \item Dispensa o armazenamento de todos os produtos parciais.
  \item Constrói o resultado de forma incremental.
  \item Reduz o acúmulo de valores transportados na etapa final.
  \item Possui estrutura mais adequada à demonstração formal e à implementação sequencial.
\end{enumerate}

\section{Extensão para Base \texorpdfstring{$b$}{b}}

A demonstração permanece válida em qualquer base \(b > 1\), bastando substituir todas as ocorrências de \(10\) por \(b\). Assim, o algoritmo alternativo define uma família de métodos de multiplicação válidos para todas as bases posicionais.

\section{Conclusão}

Apresentou‑se a demonstração formal de um algoritmo alternativo da multiplicação em base 10, descoberto de forma intuitiva por uma criança de 10 anos. O método mostrou‑se correto, eficiente e conceitualmente elegante, destacando‑se por produzir o resultado de forma incremental e simplificar a manipulação dos valores transportados.

Além de seu valor matemático, o episódio revela a criatividade natural das crianças diante da aritmética e a importância de incentivar a exploração de ideias originais no aprendizado de matemática.

\end{document}